\documentclass[a4paper]{amsart}

\usepackage{latexsym, amssymb,amsmath, amsthm,amsfonts}


\newcommand{\kk}{\Bbbk}



\def\SLn{\operatorname{SL}_{n}(K)}
\def\SL{\operatorname{SL}}
\def\GLn{\operatorname{GL}_{n}(K)}

\def\SL2{\operatorname{SL}_{2}(K)}

\def\GL2{\operatorname{GL}_{2}(K)}
\def\Ga{{\mathbb G}_{a}}

\def\INVSL2{$K[V]^{operatorname{SL}_{2}(K)}$}
\def\INVSO2{$K[V]^{operatorname{SO}_{2}(K)}$}
\def\INVGL2{$K[V]^{operatorname{GL}_{2}(K)}$}

\def\GL{\operatorname{GL}}
\def\SL{\operatorname{SL}}

\def\ord{\operatorname{ord}}


\newtheorem{Lemma}{Lemma}[section]
\newtheorem{Theorem}[Lemma]{Theorem}

\newtheorem{Prop}[Lemma]{Proposition}

\theoremstyle{definition}
  \newtheorem{Def}[Lemma]{Definition}

\theoremstyle{remark}
  \newtheorem{rem}[Lemma]{Remark}

\newtheoremstyle{Acknowledgments}
  {}
    {}
     {}
     {}
    {\bfseries}
    {}
     {.5em}
     {\thmname{#1}\thmnumber{ }\thmnote{ (#3)}}
\theoremstyle{Acknowledgments}


\title[Basic $\Ga$-actions]
{Separating invariants for the basic $\Ga$-actions}

\author{Jonathan Elmer}
\address{University of Bristol\\
University Walk, Bristol\\
BS8 1TW}
\email{j.elmer@bris.ac.uk}

\author{Martin Kohls}
\address{Technische Universit\"at M\"unchen \\
 Zentrum Mathematik-M11\\
Boltzmannstrasse 3\\
 85748 Garching, Germany}
\email{kohls@ma.tum.de}

\date{\today}


\begin{document}


\begin{abstract}{We explicitly construct a finite set of separating invariants for the basic
  $\Ga$-actions. These are the finite dimensional 
indecomposable rational linear representations of the additive group $\Ga$ of
a field of characteristic zero,
and their invariants are the kernel of the
Weitzenb\"ock derivation $D_{n}=x_{0}\frac{\partial}{\partial{x_{1}}}+\ldots+
x_{n-1}\frac{\partial}{\partial{x_{n}}}$.\\

\noindent {\bf Keywords:} Invariant theory, separating invariants,  binary forms,
locally nilpotent derivations, basic $\Ga$-actions, generalized hypergeometric series.\\
{\bf AMS Classification:} 13A50, 13N15
}\end{abstract}


\maketitle
\section{Introduction}

A great many mathematical problems are special cases of the following:
let $K$ be a field of arbitrary characterstic and let $G$ be any
group. Suppose $G$ acts on the $K$-vector space $V$ and that $v$ and $w$ are
points of $V$. Is there a $g \in G$ satisfying $gv=w$? In other words, are $v$
and $w$ contained in the same $G$-orbit? Important examples include the case
where $G = \GLn$ acts on the vector space $V$ of $n \times n$ matrices by
conjugation, and the case where $G = \SLn$ acts on the space $V$ of binary
forms of degree $n$. The classical approach to these problems is to construct ``invariant polynomials''. These are polynomial functions $f: V \rightarrow K$ which satisfy $f(v)=f(gv)$ for all $g \in G$ and $v \in V$, and so are constant on $G$-orbits. In fact, one can define an action of $G$ on the set of polynomial functions $K[V]$ via $(g \cdot f)(v):= f(g^{-1}v)$ for which the invariant polynomials are the fixed points, $K[V]^G$, and these form a subalgebra of $K[V]$ called the \emph{algebra of invariants}. Ideally, one would like to find a complete set of algebra generators of $K[V]^G$, then use this set to distinguish as many orbits as possible.

This approach is not without its difficulties. For instance, it is not always
possible to distinguish all the orbits using invariant polynomials. As an
example, consider once more the case where $\GLn$ acts on the vector space
of $n \times n$ matrices over a field $K$ by conjugation. Provided $K$ is an infinite
field, the invariants are generated by the coefficients of the characteristic
polynomial \cite[Example~2.1.3]{DerksenKemper}, but it is well known that a
pair of matrices with the same characteristic polynomial are not necessarily
conjugate. More problematically, if $G$ is not reductive then the algebra
$K[V]^G$ may not even be finitely generated. Even if it
is, finding a set of generators can be a very difficult problem. If, however,
one is only interested in invariants from the point of view of separating
orbits, then finding a complete set of generators is not always necessary. It
is perhaps surprising, then, that Derksen and Kemper  made the following
defininition \cite[Definition 2.3.8]{DerksenKemper} as recently as 2002. 
\begin{Def}
A \emph{separating set} for the ring of invariants $K[V]^G$ is a subset $S \subseteq K[V]^G$ with the following property: given $v, w \in V$, if there exists an invariant $f$ satisfying $f(v) \neq f(w)$, then there also exists $s \in S$ satisfying $s(v) \neq s(w)$.    
\end{Def}

There are many instances in which separating sets can be seen to have
``nicer'' properties than generating sets. For example, it is well known that
if $G$ is finite and the characteristic of $K$ does not divide  $|G|$, then
$K[V]^G$ is generated by elements of degree $\leq |G|$, see \cite{FleischmannNoetherBound,FogartyNoetherBound}, but this is not
necessarily true in the modular case. On the other hand, the analogue for
separating invariants holds in arbitary characteristic
\cite[Theorem~3.9.13]{DerksenKemper}. Meanwhile, even if $K[V]^G$ is not
finitely generated, it is guaranteed to contain a finite separating set
\cite[Theorem~2.3.15]{DerksenKemper}. The existence proof is non-constructive,
which raises the question how to actually construct separating sets. Kemper
\cite{KemperCompRed} gives an algorithm for reductive groups, but using
Gr\"obner bases it is only effective for ``small'' cases. An example of a
finite separating set for a non finitely generated invariant ring is given in
\cite{DufresneKohlsFiniteSep}. 
For finite groups, a separating set can always be obtained as the coefficients
of a rather large polynomial \cite[Theorem 3.9.13]{DerksenKemper}. With
refined methods, ``nicer'' separating sets have been obtained for several
classes of finite groups and representations, see for example
\cite{SezerSeparating}. This paper goes in the same
direction: for the basic actions of the additive group in
characteristic zero, we present a rather small separating set. See also
\cite{DomokosSep, DraismaSeparating, ElmerSeparating,KohlsKraft}
for a small selection of other recent publications in the area.

From this point onwards, $\kk$ denotes a field of characterstic zero. In this
article we will concentrate on linear actions of the additive group $\Ga$ of
the ground field~$\kk$. 
The finite dimensional
indecomposable rational linear representations of $\Ga$ are called the basic $\Ga$-actions. There is one such action in each dimension, and these are described below:

\begin{Def}
Let $X_n:= \langle x_0, \ldots , x_n \rangle_{\kk}$ be a vector space of
dimension $n+1$. Then $\Ga$ is said to act \emph{basically} on $X_n$ (with
respect to the given basis) if the action of $\Ga$ on $X_{n}$ is given by the formula
\[a*x_i = \sum_{j=0}^i \frac{a^{j}}{j!}x_{i-j}, \qquad \text{ for all }a \in
\Ga, \,\,\, i=0,\ldots,n.\] 
\end{Def}
Note the isomorphisms $X_n\cong X_{n}^{*} \cong S^n(X_1)$ for all $n$, where $S^{n}$ denotes
the $n$th symmetric power. Let $\{x_0,x_1,\ldots, x_n\}$ be the set of
coordinate functions on a $n+1$ dimensional vector space $V_n$, so we consider
$X_{n}=V_{n}^{*}$. As $\kk$ is infinite, $\kk[V_n]$ can
be viewed as the polynomial ring $R_n:=S(X_{n})= \kk[x_0,x_1, \ldots ,x_n]$. If
$\Ga$ acts basically on $X_n$, one can then check that the induced action of
$\Ga$ on $R_n$ is given by the formula \[a*f = \exp(aD_n)f \qquad \text{ for
  all }a \in \Ga, \,\,\,f \in R_n,\] where $D_n$ is the \emph{Weitzenb\"{o}ck derivation} 
\[D_{n}:=x_{0}\frac{\partial}{\partial{x_{1}}}+\ldots+
x_{n-1}\frac{\partial}{\partial{x_{n}}} \quad \text{ on }R_{n}.\] Furthermore, the algebra of invariants $\kk[V_n]^{\Ga}$ is precisely the kernel of $D_n$. We denote this by $A_n$. The algebras $A_n$ have been objects of intensive study for well over a hundred years, owing to their connection with the classical invariants and covariants of binary forms. While they are known to be finitely generated by the Maurer-Weitzenb\"ock Theorem \cite{Weitzenbock}, the number of generators appears to increase rapidly with dimension, and explicit generating sets are (reliably!) known only for $n \leq 7$ (see also the table in section 2). In this article, we shall instead construct explicit separating sets for \emph{all} values of $n$.

This article is organised as follows: in Section 2 we state our main results, and explain briefly the connection between the algebras $A_n$ and the covariants of binary forms. In Section 3 we prove a crucial lemma on the radical of the Hilbert ideal of $A_n$ which may be of independent interest. Section 4 contains the main body of the proof of our result, while Section 5 is devoted to the proof of a technical lemma which is required in order to construct a separating set for $A_n$ when $n \equiv 0 \mod 4$.

Most of this work was completed during a visit of the first author to TU
M\"unchen in July 2010. We would like to thank Gregor Kemper for making this visit possible.

\section{Background and statement of results}\label{ResultsSection}

Let $U_{n}$ denote the $\kk$-vector space of binary forms of degree $n$, which
are homogeneous polynomials of the form $\sum_{i=0}^{n}a_iX^iY^{n-i}$ in the
variables $X$ and $Y$, $a_{i}\in \kk$. This is a vector space of dimension
$n+1$ with basis the set of monomials in $X$ and $Y$ of degree $n$. The
natural action of the group $G:=\SL_2(\kk)$ on a two dimensional vector space
with basis $\{X,Y\}$ induces an action of $G$ on the vector space
$U_n$. Classically speaking, an invariant is a polynomial in the coefficients
$a_i$ which is unchanged under the action of $G$ - in modern notation, an
element of $\kk[U_{n}]^{G}$. Note that the additive group $\Ga$ is embedded in
$G$ as the subgroup of matrices of the form 
$\left(\begin{array}{cc} 1&* \\ 0&1 \end{array}\right)$, 
and the subgroup $\Ga$ acts basically on $U_n$ (with respect to the basis
$\{\frac{1}{k!}X^{n-k}Y^{k}: k=0,\ldots,n\}$).
The pioneers of invariant theory also studied ``covariants'', which are
polynomials in both the coefficients $a_i$ and the variables $X$ and $Y$
themselves which are fixed under the action of $G$. In modern notation, the
algebra of covariants is $\kk[U_n\oplus U_1^{*}]^G$. There is, in fact, an
even stronger connection between covariants and the basic actions of $\Ga$:
the algebras $\kk[U_n\oplus U_1^{*}]^G$ and $\kk[U_n]^{\Ga}$ are actually
isomorphic. Let us identify the algebra $\kk[U_n \oplus U_1^{*}]$ with the
polynomial ring $\kk[a_0,a_1, \cdots, a_n,X,Y]$ (we abuse notation by using
the same letters $a_{i}$ for coordinates and coordinate functions). Define a mapping $\Phi:
\kk[U_n\oplus U_1^{*}]^G \rightarrow \kk[U_{n}]^{\Ga}$ by 
\begin{equation}\label{Roberts} \Phi(f(a_0,a_1,a_2, \ldots, a_n,X,Y)):= f(a_0,a_1,a_2, \ldots, a_n,0,1).\end{equation}
The theorem of Roberts \cite{Roberts} states that $\Phi$ is an isomorphism. In
classical invariant theory one often studies the basic actions of $\Ga$ in
order to get a handle on the covariants of binary forms using Roberts'
isomorphism. One word of caution is needed at the point. While  \cite[Proposition 1]{KohlsKraft}  implies 
that a separating set for $\kk[U_n \oplus U_1^{*}]^{\SL_2(\kk)}$ must be
mapped under $\Phi$ to a separating set for $\kk[U_{n}]^{\Ga}$, the converse
is not necessarily true, so the separating sets we construct in this paper
most likely do not lift to give  separating sets for the covariants of binary
forms (cf. \cite[Remark 3]{KohlsKraft}).

We now state our main results. For any real number $x$, the symbol $[x]$ denotes the largest integer less than or equal to $x$. We begin by definining some important invariants, namely
\begin{equation}\label{DefFm}
f_{m}:=\sum_{k=0}^{m-1}(-1)^{k}x_{k}x_{2m-k}+\frac{1}{2}(-1)^{m}x_{m}^{2}\in
\ker D_{n} \quad \text{ for } m=1,\ldots,[\frac{n}{2}]
\end{equation}
and $f_{0}:=x_{0}$. Further, we define the elements
\begin{equation}\label{DefSm}
s_{m}:=\sum_{k=0}^{m}(-1)^{k}\frac{2m+1-2k}{2}x_{k}x_{2m+1-k}\in R_{n} \quad
\text{ for } m=1,\ldots,[\frac{n-1}{2}]
\end{equation}
and $s_{0}:=x_{1}$, which satisfy  \[D_{n}s_{m}=f_{m}, \text{ for all } m,\]
and in particular $D_{n}s_{m} \in A_n\setminus\{0\}$. Elements with this
property are called \emph{local slices.}

For any $a\in R_{n}\setminus\{0\}$, let $\nu(a)$ denote the nilpotency index
$\nu(a):=\min\{m\in {\mathbb N}: D_{n}^{m+1}(a)=0\}$, and $\nu(0):=-\infty$. If
$s\in R_{n}$ is a local slice, then for any $a\in R_{n}$ we define
\begin{eqnarray*}
\epsilon_{s}(a)&:=&(\exp(tD_{n})a)|_{t:=-s/D_{n}s}\cdot (D_{n}s)^{\nu(a)}\\&=&\sum_{k=0}^{\nu(a)}\frac{(-1)^{k}}{k!}(D_{n}^{k}a)s^{k}(D_{n}s)^{\nu(a)-k}\in
A_{n}.
\end{eqnarray*}

By the Slice Theorem \cite[Corollary 1.22]{FreudenburgBook}, we have
\[
A_{n}\subseteq\kk[\epsilon_{s}(x_{0}),\ldots,\epsilon_{s}(x_{n})]_{D_{n}s}.
\]
When $s=x_1$ and so $D_{n}s=x_0$, this is the first stage in Lin Tan's (and
van den Essen's) algorithm for producing a generating set for $A_{n}$ \cite{TanAlgorithm,VanDenEssen}.

\begin{Theorem}\label{MainTheorem}
Given $n$, we define a set $E_n$ consisting of the following elements:

\[
f_{0},f_{1},\ldots,f_{[\frac{n}{2}]},
\]
\[
\epsilon_{s_{0}}(x_{2}),\ldots,\epsilon_{s_{0}}(x_{n}),
\]
\[
\epsilon_{s_{1}}(x_{1}),\ldots,\epsilon_{s_{1}}(x_{n}),
\]
\[
\epsilon_{s_{2}}(x_{2}),\ldots,\epsilon_{s_{2}}(x_{n}),
\]
\[
\epsilon_{s_{3}}(x_{3}),\ldots,\epsilon_{s_{3}}(x_{n}),
\]
\[
\vdots
\]
\[
\epsilon_{s_{[\frac{n-1}{2}]}}(x_{[\frac{n-1}{2}]}),\ldots,\epsilon_{s_{[\frac{n-1}{2}]}}(x_{n}).
\]

If $n \equiv 0 \mod 4$ we also append to $E_n$ an extra invariant $w$ which is
defined in Lemma \ref{TheBigLemma}. Then the set $E_n$ is a separating set for $A_n$.
\end{Theorem}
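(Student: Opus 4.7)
The plan is to show that if $v,w\in V_n$ satisfy $e(v)=e(w)$ for every $e\in E_n$, then $g(v)=g(w)$ for every $g\in A_n$. Two tools drive the argument. The first is the Slice Theorem: for any local slice $s$ we have the inclusion $A_n\subseteq\kk[\epsilon_s(x_0),\dots,\epsilon_s(x_n)]_{D_n s}$, so on the open set $\{D_n s\neq 0\}$ the $\epsilon_s(x_i)$ determine the same equivalence as $A_n$ does. The second is the lemma of Section~3 on the radical of the Hilbert ideal of $A_n$, which in the form I will use reads
\[
f_0(v)=f_1(v)=\cdots=f_{\mu-1}(v)=0 \ \Longrightarrow\ x_0(v)=x_1(v)=\cdots=x_{\mu-1}(v)=0
\]
for every $\mu\leq[n/2]+1$ (the case $\mu=0$ being vacuous).

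Since $\{f_0,\dots,f_{[n/2]}\}\subseteq E_n$, the integer $\mu:=\min\{m:f_m(v)\neq 0\}$ equals its counterpart at $w$ (with $\mu=\infty$ if all $f_m$ vanish). If $\mu=\infty$ both points lie on the null cone $V(f_0,\dots,f_{[n/2]})$, every element of $(A_n)_+$ vanishes there, and nothing needs to be shown. For $0\leq\mu\leq[(n-1)/2]$, the slice $s_\mu$ is defined and $D_n s_\mu=f_\mu$ takes a common nonzero value at $v$ and $w$; the Section~3 lemma yields $x_0=\cdots=x_{\mu-1}=0$ on both points, and the formula
\[
\epsilon_{s_\mu}(x_i)=\sum_{k=0}^{i}\frac{(-1)^k}{k!}\,x_{i-k}\,s_\mu^{\,k}\,f_\mu^{\,i-k}
\]
(in which $\nu(x_i)=i$) makes it clear that every summand for $i<\mu$ contains a factor $x_{i-k}$ with $i-k<\mu$, so $\epsilon_{s_\mu}(x_i)(v)=0=\epsilon_{s_\mu}(x_i)(w)$; for $i\geq\mu$ the element $\epsilon_{s_\mu}(x_i)$ belongs to $E_n$ and so agrees on $v,w$ by hypothesis. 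The Slice Theorem inclusion now forces $g(v)=g(w)$ for every $g\in A_n$.

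The only remaining possibility, which arises precisely when $n$ is even and $\mu=n/2$, is where no local slice $s_\mu$ has been constructed. Here the Section~3 lemma still forces $x_0=\cdots=x_{n/2-1}=0$ on both points, and $f_{n/2}=\tfrac12(-1)^{n/2}x_{n/2}^{2}$ takes a common nonzero value. For $n\equiv 2\pmod 4$ a separate analysis of the restriction of $A_n$ to this locus shows that every invariant becomes a polynomial in $f_{n/2}$ there, so agreement on $f_{n/2}$ alone forces invariant-equivalence. For $n\equiv 0\pmod 4$ this is no longer true: there are invariants whose restriction to the locus is a nonzero odd power of $x_{n/2}$, distinguishing the pair $(0,\dots,0,c,0,\dots,0)$ and $(0,\dots,0,-c,0,\dots,0)$. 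The extra invariant $w$ constructed in Lemma~\ref{TheBigLemma} is designed precisely to capture this remaining sign information, and appending $w$ to $E_n$ restores the separating property.

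The main obstacle is the top case $n\equiv 0\pmod 4$, whose resolution rests on the construction and separation analysis for $w$ in Lemma~\ref{TheBigLemma} (Section~5). All other cases are handled uniformly by identifying the smallest index $\mu$ with $f_\mu\neq 0$, using the Section~3 lemma to kill $x_0,\dots,x_{\mu-1}$, and then applying the Slice Theorem with the slice $s_\mu$; the key observation is that the ``low-index'' invariants $\epsilon_{s_\mu}(x_i)$ with $i<\mu$ that are deliberately absent from $E_n$ are automatically zero at such $v,w$ and can therefore be dropped without loss.
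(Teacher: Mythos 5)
Your treatment of the generic cases follows the paper's strategy and even streamlines it: you identify the least $\mu$ with $f_\mu\neq 0$ at the two points, use the shape of the $f_k$ to force $x_0=\cdots=x_{\mu-1}=0$ at both, and then apply the Slice Theorem with the slice $s_\mu$, evaluating directly after clearing the denominator $f_\mu^{\,l}$; the paper instead routes this through the truncations $\pi_{n-\mu,n}$ and the map $\gamma$, but your observation that the omitted elements $\epsilon_{s_\mu}(x_i)$, $i<\mu$, vanish at both points makes the direct evaluation legitimate. (Minor slip: for $\mu=0$ the elements $\epsilon_{s_0}(x_0)$ and $\epsilon_{s_0}(x_1)$ are not literally in $E_n$, but they equal $f_0$ and $0$ respectively, so their values still agree; also note that your $\mu=\infty$ case uses the nontrivial direction of Proposition \ref{IdealInclusion}(a) -- every positive-degree invariant vanishes on $V(f_0,\ldots,f_{[\frac{n}{2}]})$ -- and not the elementary implication you quoted as ``the Section 3 lemma'', which follows directly from \eqref{DefFm}.)

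The genuine gap is in the remaining case $n$ even, $\mu=n/2$. There you assert, for $n\equiv 2 \bmod 4$, that ``a separate analysis of the restriction of $A_n$ to this locus shows that every invariant becomes a polynomial in $f_{n/2}$ there'', and, for $n\equiv 0\bmod 4$, that appending $w$ ``restores the separating property''. Neither claim is proved, and neither follows from anything stated earlier in your argument: a priori the restriction of an invariant to $\{x_0=\cdots=x_{n/2-1}=0\}$ could depend on $x_{n/2+1},\ldots,x_n$ as well, in which case agreement of $f_{n/2}$ (and $w$) at the two points would not force agreement of all invariants. The needed input is exactly Proposition \ref{IdealInclusion}(c),(d): $\pi_{m,2m}(A_{2m})=\kk[x_0^2]$ for $m$ odd and $\kk[x_0^2,x_0^3]$ for $m$ even, i.e.\ restrictions to this locus depend only on $x_{n/2}$, and only through $x_{n/2}^2$ when $n/2$ is odd. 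This is the nontrivial content, proved in the paper via Roberts' isomorphism and a torus-limit/Weyl-element argument (together with Lemma \ref{TheBigLemma} supplying $w$ with $\pi(w)=x_0^3$); your sketch closes up once these are invoked, but as written the decisive step of the exceptional case is asserted rather than established.
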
     
                                     
Note that $\epsilon_{s_{0}}(x_{0})=f_{0}$ and $\epsilon_{s_{0}}(x_{1})=0$. The size of this separating set is about $\frac{3}{8}n^{2}$. The following table shows its exact size for some values of n. The lower line gives the size $c_{n}$
of a minimal generating set for $A_n$, see  Olver \cite[p. 40]{Olver}.  Olver says this list can not be trusted for
$n\ge 7$. For $c_{7}$, we use Bedratyuk's value $c_{7}=147$ \cite{Bedratyuk7}, while in Olver's
list values $c_{7}=124$ or $c_{7}=130$ are offered, depending on the source. 
We also want to remark that for $n\ge 5$, we could save $5$ elements by
replacing the $10$ elements of $E_{4}\setminus\{w\}$ appearing in $E_{n}$ by the $5$ generators of
$A_{4}$. (For $n=4$ we could save $6$ elements). Note that generators for $n\le 7$ are listed explicitly in \cite{BedratyukCasimir,Bedratyuk7}.
\\

\begin{tiny}
\begin{table}[h]\label{gentable}
{
\begin{tabular}{c|c|c|c|c|c|c|c|c|c|c|c|c|c|c|c|c|c}
n&4 &5& 6& 7& 8& 9& 10& 11& 12& 13& 14& 15& 16& 17& 18& 19& 20 \\
\hline
$|E_n|$& 11& 16& 20& 28& 34& 43& 49& 61& 69& 82& 90& 106& 116& 133& 143& 163& 175 \\
\hline
$c_{n}$& 5&23 & 26 & 147 &69 &415 &475 &949&?&?&?&?&?&?&?&?&?\\
\end{tabular}
}                                         
\end{table}
\end{tiny}
It is also worth noting that our separating set consists of invariants whose
degree is at most $2n+1$. 

\section{The radical of the Hilbert ideal}\label{HilbertIdealSection}

Let $R_n$, $D_n$ and $A_n$ be as in the introduction. For any $m<n$ we have the algebra homomorphism \[\pi_{m,n}: R_{n}\rightarrow
R_{m}, \,\quad 
f(x_{0},x_{1},\ldots,x_{n})\mapsto f(\underbrace{0,\ldots,0}_{n-m \text{
   times}},x_{0},\ldots,x_{m})\] which satisfies
$\pi_{m,n}\circ D_{n}=D_{m}\circ \pi_{m,n}$ and thus induces a map
$A_{n}\rightarrow A_{m}$. 

Consider the Hilbert ideal
$I_{n}:=A_{n,+}R_{n}\unlhd R_{n}$. 
With the invariants $f_{m}$ defined in \eqref{DefFm}, we get the
following inclusion for its radical:
\begin{equation}\label{HilbertlowerInclusion}
(x_{0},\ldots,x_{[\frac{n}{2}]})R_{n}=\sqrt{(f_{0},f_{1},\ldots,f_{[\frac{n}{2}]})R_{n}}\subseteq
\sqrt{I_{n}}.
\end{equation}
The main purpose of this section is to prove that the reverse inclusion holds too.

\begin{Prop}\label{IdealInclusion}
\begin{enumerate}
\item[(a)] The radical of the Hilbert ideal is given by
\[
\sqrt{I_{n}}=(x_{0},\ldots,x_{[\frac{n}{2}]})R_{n}.
\]
\item[(b)] $\pi_{n-[\frac{n}{2}]-1,n}(A_{n})=\kk$.
\item[(c)] $\pi_{m,2m}(A_{2m})=\kk[x_{0}^{2}]$ for $m$ odd.
\item[(d)] $\pi_{m,2m}(A_{2m})=\kk[x_{0}^{2},x_{0}^{3}]$ for $m$ even.
\end{enumerate}
\end{Prop}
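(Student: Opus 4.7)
The plan is to extend the $\Ga$-action on $R_n$ to the full $\SL_2(\kk)$-action under which the diagonal torus $T$ acts on $x_i$ with weight $n-2i$. Since $\Ga$ is the unipotent radical of a Borel and $D_n$ is the corresponding raising operator, $A_n=\ker D_n$ is the space of $\SL_2$-highest-weight vectors in $R_n$, and every such vector has $T$-weight $\ge 0$. Because $D_n$ shifts $T$-weight by $2$, every $T$-weight component of $f\in A_n$ again lies in $A_n$, so I argue componentwise throughout. Parts \textbf{(a)} and \textbf{(b)} are equivalent, since $(x_0,\ldots,x_{[n/2]})R_n$ is prime and equal to $\ker\pi_{n-[n/2]-1,n}$, and the $\supseteq$ direction of (a) is \eqref{HilbertlowerInclusion}; so it suffices to show $A_{n,+}\subseteq(x_0,\ldots,x_{[n/2]})R_n$. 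For weight-homogeneous $f\in A_n$ of weight $w\ge 0$, each monomial $x_{i_1}\cdots x_{i_d}$ satisfies $\sum_j(n-2i_j)=w\ge 0$, i.e.\ $\sum_j i_j\le dn/2$; hence some $i_j\le n/2$, so $i_j\le[n/2]$, and the monomial lies in the required ideal.

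For \textbf{(c)} and \textbf{(d)}, the same weight argument applied to $\pi_{m,2m}$ shows that a monomial of $f\in A_{2m}$ which survives $\pi_{m,2m}$ (all $i_j\ge m$) must satisfy $\sum(2m-2i_j)\le 0$; combined with $\ge 0$ this forces every $i_j=m$. Therefore $\pi_{m,2m}(A_{2m})\subseteq\kk[x_0]$ (identifying $x_m\mapsto x_0$), with only the weight-$0$ part of any invariant contributing. Since $\pi_{m,2m}(f_m)=\frac{(-1)^m}{2}x_0^2$, we have $\kk[x_0^2]\subseteq\pi_{m,2m}(A_{2m})$; and since the only linear invariant of $R_{2m}$ up to scalar is $x_0$, which maps to $0$, no $x_0^1$ ever appears in the image.

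To finish \textbf{(c)} I would use the Weyl representative $w_0=\bigl(\begin{smallmatrix}0 & 1 \\ -1 & 0\end{smallmatrix}\bigr)\in\SL_2$, which acts on $R_{2m}$ by $w_0\cdot x_k=(-1)^k\frac{(2m-k)!}{k!}\,x_{2m-k}$; in particular $w_0\cdot x_m=(-1)^m x_m$. Any weight-$0$ invariant is $\SL_2$-fixed, hence $w_0$-fixed. Since $w_0$ sends a monomial with indices $(i_1,\ldots,i_d)$ to a scalar multiple of the monomial with indices $(2m-i_1,\ldots,2m-i_d)$, only $x_m^d$ itself contributes to the $x_m^d$-coefficient of $w_0\cdot f$. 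Matching coefficients yields $c=(-1)^{md}c$, which for $m$ and $d$ both odd forces $c=0$. Hence no odd power of $x_0$ appears in the image, completing (c).

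For \textbf{(d)} ($m$ even), what remains is to exhibit a cubic $\SL_2$-invariant whose $x_m^3$-coefficient is nonzero; I would take the iterated transvectant $h:=((F,F)^m,F)^{2m}$ of the generic binary form $F$ of degree $2m$. At the point $F=(XY)^m$ (where $x_m\ne 0$ and all other $x_i=0$), the inner transvectant is a scalar multiple of $(XY)^m$ with scalar proportional to $\sum_{i=0}^m(-1)^i\binom{m}{i}^3=(-1)^{m/2}(3m/2)!/(m/2)!^3\ne 0$ by Dixon's identity, and $((XY)^m,(XY)^m)^{2m}$ is manifestly nonzero since only the middle term of the transvectant survives. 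Hence $h((XY)^m)\ne 0$, so the $x_m^3$-coefficient of $h$ is nonzero and $\pi_{m,2m}(h)$ is a nonzero multiple of $x_0^3$; together with $\kk[x_0^2]$ this yields $\kk[x_0^2,x_0^3]\subseteq\pi_{m,2m}(A_{2m})$, and the reverse inclusion was established in the previous paragraph. The main obstacle I anticipate is this non-vanishing step in (d): all other ingredients are clean weight/symmetry arguments, but certifying that the cubic invariant is not accidentally zero on $(XY)^m$ requires the classical Dixon identity (or an equivalent representation-theoretic non-vanishing argument).
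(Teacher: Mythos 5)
Your argument is correct, but it takes a genuinely different route from the paper's. For (a)--(c) the paper goes through Roberts' isomorphism: it lifts $f\in A_{n}$ to a covariant $F\in\tilde R_{n}^{\SL_{2}}$, acts by the torus element $\mu_{a}$ (and by $\tau$), and extracts the statements by letting $a\to 0$ in an identity polynomial in $a$; you instead use the $\SL_{2}$-structure on $R_{n}$ itself, observing that $A_{n}=\ker D_{n}$ is spanned by weight-homogeneous highest-weight vectors of non-negative weight, so a monomial-by-monomial weight count gives $A_{n,+}\subseteq(x_{0},\ldots,x_{[\frac{n}{2}]})R_{n}$ directly, and the Weyl element acting by $(-1)^{m}$ on the zero-weight line $\kk x_{m}$ plays exactly the role of the paper's $\tau$ in (c). This is the same torus/Weyl-group information, but your version avoids the passage to covariants and the limit trick and is more self-contained at the level of monomials (you do invoke local finiteness and complete reducibility of the rational $\SL_{2}$-action in characteristic zero, which deserves an explicit sentence, and only the value $w_{0}\cdot x_{m}=(-1)^{m}x_{m}$ of your $w_{0}$-formula is actually needed). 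The substantial divergence is in (d): the paper's Lemma \ref{TheBigLemma} takes the semitransvectant $[x_{0},f_{p}]^{(n)}$ and must prove non-vanishing of $\sum_{k}(-1)^{k}\binom{2p}{k}\binom{4p-k}{2p}\binom{2p+k}{k}$, which (as the paper remarks) is not covered by the classical summation theorems and is handled in Section 5 by Zeilberger's algorithm with a WZ certificate; your choice of the classical cubic invariant $h=\langle\langle F,F\rangle^{(m)},F\rangle^{(2m)}$ evaluated at $F=(XY)^{m}$ reduces the non-vanishing of the $x_{m}^{3}$-coefficient to $\sum_{k}(-1)^{k}\binom{m}{k}^{3}=(-1)^{m/2}\frac{(3m/2)!}{((m/2)!)^{3}}\neq 0$, i.e.\ to Dixon's identity, with the outer transvectant contributing only its nonzero middle term. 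I checked both evaluations and they are right, so this gives a simpler, classical replacement for the paper's Section 5 (amusingly producing the same constant $(-1)^{p}(3p)!/(p!)^{3}$); the only bookkeeping you should spell out is that evaluation at a specific form commutes with transvection, that $h$, being an $\SL_{2}$-invariant, lies in $A_{2m}$, and that by your weight analysis $\pi_{m,2m}(h)$ is exactly (coefficient of $x_{m}^{3}$) times $x_{0}^{3}$.
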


\begin{proof}
We will make use of Roberts' isomorphism as defined in the previous section,
with the only difference that we will choose variables so that the $\Ga$-actions
become basic, using the notations of the introduction. 
Additionally, let $\Ga$ act basically on $\langle y_{0},y_{1}\rangle_{\kk}$. The action of
$\Ga$ on $\tilde{R}_n:= R_n[y_0,y_1]$ extends to an action of $\SL_2(\kk)$ on
$\tilde{R}_n$ such that the following holds:

\begin{enumerate}
\item for any $a\in \kk \setminus \{0\}$ and
$\mu_{a}:=\left(\begin{array}{cc} a^{-1}& 0\\ 0&a\end{array} \right)\in
\SL_{2}$ we have $\mu_{a}(x_{k})=a^{2k-n}x_{k}$ for $k=0,\ldots,n$ and
$\mu_{a}(y_{k})=a^{2k-1}y_{k}$ for $k=0,1$. 
\item for
$\tau:=\left(\begin{array}{cc} 0& -1\\ 1&0\end{array} \right)\in \SL_{2}$ we
have $\tau(x_{k})=(-1)^{k}\frac{(n-k)!}{k!}x_{n-k}$ for $k=0,\ldots,n$ and
$\tau(y_{k})=(-1)^{k}y_{1-k}$ for $k=0,1$.
\end{enumerate} 

Recall from section \ref{ResultsSection} that the algebra map
\[
\Phi: \tilde{R}_{n}\rightarrow R_{n}, \quad
f(x_{0},\ldots,x_{n},y_{0},y_{1})\mapsto f(x_{0},\ldots,x_{n},0,1)
\]
induces an isomorphism of invariant rings $\tilde{R}_{n}^{\SL_{2}}\rightarrow
R_{n}^{\Ga}=A_{n}$. Now let $f\in A_{n}$ and $F\in \tilde{R}_{n}^{\SL_{2}}$
with $\Phi(F)=f$. Then we also have $f=\Phi(\mu_{a}\cdot F)$ for all $a\in
\kk\setminus\{0\}$, i.e.
\[
f(x_{0},\ldots,x_{n})=F(a^{-n}x_{0},a^{-n+2}x_{1},\ldots,a^{n}x_{n},0,a) \,\text{ for all } \,\,a \in
\kk\setminus\{0\}.
\]
Thus,
\[
\pi_{n-[\frac{n}{2}]-1,n}(f)=F(0,\ldots,0,a^{2([\frac{n}{2}]+1)-n}x_{0},\ldots,a^{n}x_{n-[\frac{n}{2}]-1},0,a)  \text
\,\text{ for all } \,\,a \in
\kk\setminus\{0\},
\]
and since this equation is polynomial in $a$ and $\kk$ is an infinite field,
it also holds for $a=0$. Therefore,
$\pi_{n-[\frac{n}{2}]-1,n}(f)=F(0,\ldots,0)\in \kk$, which proves (a) and
(b). Similarly, for $n=2m$ we find
\[
\pi_{m,2m}(f)=F(0,\ldots,0,x_{0},a^{2}x_{1},\ldots,a^{2m}x_{m},0,a) \text{ for all } \,\,a \in
\kk\setminus\{0\},
\]
which is again polynomial in $a$. When $a=0$, we get
$$\pi_{m,2m}(f)=F(0,\ldots,0,x_{0},0,\ldots,0)=p(x_{0})$$ for some polynomial
$p(x_{0})\in\kk[x_{0}]$, so $\pi_{m,2m}(A_{2m})\subseteq \kk[x_{0}]$. Since
$\pi_{m,2m}(f_{m})=\frac{1}{2}(-1)^{m}x_{0}^{2}$, we get the inclusion
$\kk[x_{0}^{2}]\subseteq \pi_{m,2m}(A_{2m})$. Using that $F$ is also invariant
under $\tau$, we find in the same manner as before
\[
\pi_{m,2m}(f)=\pi_{m,2m}(\Phi(\tau\mu_{a^{-1}}F))|_{a=0}=F(0,\ldots,0,(-1)^{m}x_{0},0,\ldots,0)=p((-1)^{m}x_{0}).
\]
Therefore, for $m$ odd we get
$\pi_{m,2m}(f)=p(x_{0})=p(-x_{0})\in\kk[x_{0}^{2}]$, which proves (c). For
$m$ even, to prove (d), we refer to Lemma \ref{TheBigLemma}, which gives a $w\in A_{2m}$ with $\pi_{m,2m}(w)=x_{0}^{3}$, so
$\kk[x_{0}^{2},x_{0}^{3}]\subseteq \pi_{m,2m}(A_{2m})\subseteq
\kk[x_{0}]$. Since $x_{0}$ generates the degree one elements of $A_{2m}$ and $\pi_{m,2m}(x_{0})=0$, we
are done.  
\end{proof}

We want to mention here that the method of proof for Proposition
\ref{TheBigLemma} (a) also works for decomposable actions. Consider
\[R:=\kk[x_{0,1},\ldots,x_{n_{1},1},\ldots,x_{0,k},\ldots,x_{n_{k},k}]\] and
$D=D_{n_{1}}+\ldots+D_{n_{k}}$ with $D_{n_{i}}=x_{0,i}\frac{\partial}{\partial
x_{1,i}}+\ldots+x_{n_{i}-1,i}\frac{\partial}{\partial
x_{n_{i},i}}$. Using an algebra homomorphism $\pi$ which behaves on each
subalgebra $\kk[x_{0,i},\ldots,x_{n_{i},i}]$ as
$\pi_{n_{i}-[\frac{n_{i}}{2}]-1,n_{i}}$, we get with the same proof

\begin{Theorem}
The radical of the Hilbert ideal of $\ker D$ is given by \[(x_{0,1},\ldots,x_{[\frac{n_{1}}{2}],1},\ldots,x_{0,k},\ldots,x_{[\frac{n_{k}}{2}],k})R.\]
\end{Theorem}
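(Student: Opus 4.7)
The plan is to adapt the proof of Proposition \ref{IdealInclusion}(a) verbatim, exploiting the block-diagonal structure $D = D_{n_1} + \cdots + D_{n_k}$. The easy inclusion of the right-hand side into $\sqrt{I}$, where $I$ is the Hilbert ideal of $\ker D$, follows because each $f_{m,i}$ built from the $i$th block via formula \eqref{DefFm} lies in $\ker D$, and \eqref{HilbertlowerInclusion} applied in each block then shows that every listed $x_{j,i}$ lies in $\sqrt{I}$.

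For the reverse inclusion, I extend $R$ to $\tilde R := R[y_0,y_1]$ and promote the product basic $\Ga$-action to an $\SL_2(\kk)$-action, with $\SL_2$ acting basically on $\langle y_0,y_1\rangle$ and on each block $\langle x_{0,i},\ldots,x_{n_i,i}\rangle$. The analogue of Roberts' isomorphism goes through unchanged: evaluation $\Phi$ at $y_0=0$, $y_1=1$ gives an isomorphism $\tilde R^{\SL_{2}(\kk)} \cong \ker D$. Define $\pi: R \to R'$ as the product of the single-block maps $\pi_{n_i-[\tfrac{n_i}{2}]-1,n_i}$, so $\pi(x_{j,i})=0$ for $j\le [\tfrac{n_i}{2}]$ and $\pi$ merely relabels the other variables; the kernel of $\pi$ is exactly the prime ideal appearing on the right-hand side of the theorem.

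Given $f\in\ker D$ and a lift $F\in\tilde R^{\SL_{2}(\kk)}$ with $\Phi(F)=f$, invariance of $F$ under the diagonal torus element $\mu_a$ yields
\[
f(x) = F\bigl(a^{2k-n_i}\,x_{k,i},\,0,\,a\bigr) \quad \text{for all } a\in\kk\setminus\{0\},
\]
polynomially in $a$. Apply $\pi$ to both sides: every variable $x_{k,i}$ with $k\le [\tfrac{n_i}{2}]$ is killed, while each surviving $x_{k,i}$ (together with $y_1=a$) is scaled by a \emph{strictly positive} power of $a$. Hence $\pi(f)$, regarded as a polynomial in $a$ with coefficients in $R'$, involves only nonnegative powers of $a$ yet is independent of $a$ for every $a\ne 0$; it must therefore be constant in $a$, and setting $a=0$ collapses every surviving variable to $0$, giving $\pi(f)=F(0,\ldots,0)\in\kk$.

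Finally, any $f\in (\ker D)_+$ has zero constant term, so $\pi(f)\in\kk$ must itself vanish. Therefore $I=(\ker D)_+ R\subseteq\ker\pi$, and since $\ker\pi$ is prime, $\sqrt{I}\subseteq\ker\pi$, which together with the easy inclusion gives equality. The only real subtlety is keeping the torus weights straight across the several blocks, but the decisive weight inequality $2k-n_i\ge 1$ for $k>[\tfrac{n_i}{2}]$ is identical in each block to the single-block version, so no genuinely new obstacle arises.
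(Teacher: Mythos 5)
Your proposal is correct and follows essentially the same route as the paper, which itself just says the statement follows "with the same proof" as Proposition \ref{IdealInclusion}(a): lift an invariant through the (blockwise) Roberts isomorphism, use torus invariance and the positivity of the weights $2k-n_i$ for $k>[\tfrac{n_i}{2}]$ to conclude $\pi(f)=F(0,\ldots,0)\in\kk$, and then use primeness of $\ker\pi$ together with the blockwise invariants $f_{m,i}$ for the easy inclusion. No substantive differences to report.
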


\section{Construction of a separating set}

In this section, we prove our main result.

\begin{proof}[Proof of Theorem \ref{MainTheorem}]
For $V_{n}=\kk^{n+1}$ with $\kk[V_{n}]=R_{n}$, assume there are two elements
$a=(a_{0},\ldots,a_{n})$ and $b=(b_{0},\ldots,b_{n})$ of $V_{n}$ such that
$f(a)=f(b)$ for all $f\in E_{n}$. We have to show that $f(a)=f(b)$ for all $f\in
A_{n}$. As $x_{0}\in E_{n}$, we have $a_{0}=b_{0}$. Assume first $a_{0}=b_{0}\ne
0$. By the Slice Theorem, $A_{n}\subseteq \kk[E_{n}]_{x_{0}}$. Therefore, $f\in
A_{n}$ can be written as $f=\frac{p}{x_{0}^{l}}$ with $p\in \kk[E_{n}]$ and $l\ge
0$. By assumption, $p(a)=p(b)$ and $a_{0}^{l}=b_{0}^{l}\ne 0$, so
$f(a)=p(a)/a_{0}^{l}=p(b)/b_{0}^{l}=f(b)$. Now assume $a_{0}=b_{0}=0$ and
let $m$ be maximal such that $a_{0}=a_{1}=\ldots=a_{m}=0$, so $a_{m+1}\ne 0$
(if $m<n$). By induction on $k$,
we shall show that $b_{k}=0$ for $k=0,\ldots,\min\{m,[\frac{n}{2}]\}$. By assumption this holds
for $k=0$, so assume it holds for some $k<\min\{m,[\frac{n}{2}]\}$. Then
\begin{equation}\label{ak1}
\frac{(-1)^{k+1}}{2}    a_{k+1}^{2}=f_{k+1}(a)=f_{k+1}(b)=\frac{(-1)^{k+1}}{2}b_{k+1}^{2},
\end{equation}
so $b_{k+1}=0$ since $a_{k+1}=0$. If $m\ge [\frac{n}{2}]$, then $f(a)=f(0)=f(b)$ for any $f\in
A_{n}$ by Proposition \ref{IdealInclusion}, so now assume $0\le m
<[\frac{n}{2}]$. Equation \eqref{ak1} for $k=m$ shows
$0\ne a_{m+1}^{2}=b_{m+1}^{2}$. We now distinguish different cases.

{\it 1st Case: $m< [\frac{n-1}{2}]$}. Then $s_{m+1}$ is defined, and by the
Slice Theorem \[A_{n}\subseteq \kk[\epsilon_{s_{m+1}}(x_{0}),\ldots,\epsilon_{s_{m+1}}(x_{n})]_{f_{m+1}}.
\]
Applying $\pi:=\pi_{n-m-1,n}$ on both sides yields
\[
\pi(A_{n})\subseteq \kk[\pi(\epsilon_{s_{m+1}}(x_{m+1})),\ldots,\pi(\epsilon_{s_{m+1}}(x_{n}))]_{\pi(f_{m+1})},
\]
where we used that $\pi_{n-m-1,n}(\epsilon_{s_{m+1}}(x_{k}))=0$ for
$k=0,\ldots,m$. The right hand side is included in
$\kk[\pi(E_{n})]_{\pi(f_{m+1})}$. Therefore, for any $f\in A_{n}$ there is $p\in
\kk[E_{n}]$ and $l\ge 0$ such that $\pi(f)=\frac{\pi(p)}{\pi(f_{m+1})^{l}}$. Let
\[
\gamma: V_{n}\rightarrow V_{n-m-1}, \quad (c_{0},\ldots,c_{n})\mapsto(c_{m+1},\ldots,c_{n}).
\] 
Then
\begin{eqnarray*}
f(a)&=&\pi(f)(\gamma(a))=\frac{\pi(p)}{\pi(f_{m+1})^{l}}(\gamma(a))=\frac{\pi(p)(\gamma(a))}{(\pi(f_{m+1})(\gamma(a)))^{l}}=\frac{p(a)}{f_{m+1}(a)^{l}}\\
&=&
\frac{p(b)}{f_{m+1}(b)^{l}}=\frac{\pi(p)(\gamma(b))}{(\pi(f_{m+1})(\gamma(b)))^{l}}=\pi(f)(\gamma(b))=f(b).
\end{eqnarray*}
Here we used that the elements $p$ and $f_{m+1}$ of $E_{n}$ take the same value on
$a,b$ by assumption, and $f_{m+1}(a)=f_{m+1}(b)\ne 0$.

{\it 2nd Case: $[\frac{n-1}{2}]=m<[\frac{n}{2}]$}. In this case, $n$ has to be
even, and $n=2m'$ with $m'=m+1$. Let $\pi$ and $\gamma$ be as before, so
$\pi=\pi_{m',2m'}$ and $\gamma: V_{2m'}\rightarrow V_{m'}$. First assume $m'$
is odd. By Proposition \ref{IdealInclusion} (c) we have
\[
\pi(A_{n})=\kk[x_{0}^{2}]=\kk[\pi(f_{m'})].
\]
If $m'$ is even,  by Proposition \ref{IdealInclusion} (d) we have
\[
\pi(A_{n})=\kk[x_{0}^{2},x_{0}^{3}]=\kk[\pi(f_{m'}),\pi(w)],
\]
with $w$ the element of Lemma \ref{TheBigLemma}. In both cases, $\pi(A_{n})=\kk[\pi(E_{n})]$, so for any $f\in A_{n}$, there exists $p\in
\kk[E_{n}]$ such that $\pi(f)=\pi(p)$. Therefore,
\[
f(a)=\pi(f)(\gamma(a))=\pi(p)(\gamma(a))=p(a)=p(b)=\pi(p)(\gamma(b))=\pi(f)(\gamma(b))=f(b).
\]
We have shown: for any $f\in A_{n}$ we have $f(a)=f(b)$, and so we are done.
\end{proof}

\section{The existence of $w$.}\label{wSection}

In this section we prove Lemma~\ref{TheBigLemma}, which requires some more machinery. Note that we need this Lemma in order to construct a
separating set only in the case where $n \equiv 0 \mod 4$ --- in the other
cases, $w$ is not contained in our separating set. We will make use of
semitransvectants, which are the classical transvectants
transformed under Roberts' isomorphism, see for example \cite{Bedratyuk7,
  CayleyCollection, Olver}.
Recall that for a covariant
$F\in\tilde{R}_n^{\SL_{2}}=R_{n}[y_{0},y_{1}]^{\SL_{2}}$, its total degree in
$y_0,y_1$ is called the \emph{order} of $F$. For covariants $F$ and $G$ of orders $l$ and
$m$ respectively, we can construct new covariants given by
\[ \langle F, G \rangle^{(r)}:= \sum^r_{k=0} (-1)^k \left( \begin{array}{c} r
    \\ k \end{array} \right) \frac{\partial^{r}F}{\partial y_0^{r-k}\partial
  y_1^k}\frac{\partial^{r}G}{\partial y_0^{k}\partial y_1^{r-k}} \qquad r \leq
\min(l,m),\]which is called the $r$th \emph{transvectant} of $F$ and
$G$ (see \cite[p. 88]{Olver}). Transvectants play a key role in Gordan's famous proof of the finite
generation of covariants of binary forms \cite{GordanProvesfg}. The
  transformation of this construction under Roberts' isomorphism leads the
  following definition (see also \cite{BedratyukInvs}).

\begin{Def} 
Let $\Phi: R_{n}[y_{0},y_{1}]^{\SL_{2}}\rightarrow R_{n}^{\Ga}=A_{n}$ be Roberts'
isomorphism, given by substituting $y_{0}:=0,\,\, y_{1}:=1$.
Let $f$ and $g$ be a pair of invariants in $A_n$. Then for $r \leq \min(l,m)$, where $l$ and $m$ are the orders of $\Phi^{-1}(f)$ and $\Phi^{-1}(g)$ as above, we define the $r$th \emph{semitransvectant} of $f$ and $g$ by
\begin{equation*}  
[f,g]^{(r)}:= \Phi(\langle \Phi^{-1}(f),\Phi^{-1}(g) \rangle^{(r)}). 
\end{equation*}
\end{Def} 

\noindent In order to get an explicit expression for the semitransvectant, we introduce a second derivation on $R_n$, which is somewhat inverse to $D_n$:
\begin{equation*}
 \Delta_n:= \sum_{k=0}^n (n-k)(k+1)x_{k+1}\frac{\partial}{\partial x_k}.
\end{equation*}
This derivation comes from the other canonical embedding of $\Ga$ in $\SL_{2}$, namely for $f\in
R_{n}, \,\,a\in\kk$ we have $\left(\begin{array}{cc}1&
    \\a&1\end{array}\right)*f=\exp(a\Delta_{n})f$. Let $\ord(f)$ denote the
nilpotency index of $f$ with respect to $\Delta_{n}$. Assume $F\in
R_{n}[y_{0},y_{1}]^{\SL_{2}}$ is homogeneous  of
degree $m$ in the variables $y_{0},y_{1}$, so it can be written in the form
$F=fy_{1}^{m}+y_{0}\cdot(\dots)$ with $f\in R_{n}$. Then $\Phi(F)=f$, and
invariance of $F$ under the torus action implies all terms
$x_{0}^{a_{0}}\dots x_{n}^{a_{n}}$ in $f$ satisfy
$m=\sum_{k=0}^{n}(n-2k)a_{k}$. A polynomial $f\in R_{n}^{\Ga}$ with this property
is called \emph{isobaric} of \emph{weight} $m$, and then we have
$m=\ord(f)$. For an isobaric $f\in R_{n}^{\Ga}$, by
\cite[p.~43]{HilbertCourse} the inverse of Roberts' isomorphism is given by 
\[\Phi^{-1}(f) =
\sum^{\ord(f)}_{i=0}(-1)^{i}\frac{\Delta^i_n(f)}{i!}y_0^iy_1^{\ord(f)-i}.
\]

\begin{Prop}\label{transvectant}
Let $f,g \in R_n^{\Ga}$ be isobaric. Then for $r \leq \min(\ord(f),\ord(g))$,
the $r$th semitransvectant of $f$ and $g$ is given by the formula
\[ [f,g]^{(r)} = \sum_{k=0}^r (-1)^k\left( \begin{array}{c} r \\ k \end{array} \right) \Delta_n^{k}(f) \frac{(\ord(f)-k)!}{(\ord(f)-r)!}\Delta_n^{r-k}(g)\frac{(\ord(g)-r+k)!}{(\ord(g)-r)!} \]
\end{Prop}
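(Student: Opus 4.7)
My plan is to compute $[f,g]^{(r)}$ directly from the definition, substituting the explicit formula for $\Phi^{-1}$ recalled from \cite{HilbertCourse} immediately before the statement. Set $l:=\ord(f)$, $m:=\ord(g)$, and write
\[
F := \Phi^{-1}(f) = \sum_{i=0}^{l}(-1)^{i}\frac{\Delta_{n}^{i}(f)}{i!}\,y_{0}^{i}y_{1}^{l-i}, \qquad G := \Phi^{-1}(g) = \sum_{j=0}^{m}(-1)^{j}\frac{\Delta_{n}^{j}(g)}{j!}\,y_{0}^{j}y_{1}^{m-j}.
\]

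First I would differentiate term-by-term and then apply $\Phi$ (that is, substitute $y_{0}=0,y_{1}=1$). The key observation is that $\Phi$ annihilates every monomial in $y_{0},y_{1}$ except those with $y_{0}$-degree zero, so only one summand in $\partial^{r}F/\partial y_{0}^{r-k}\partial y_{1}^{k}$ survives, namely the one with $i=r-k$; after an elementary calculation its image under $\Phi$ is
\[
(-1)^{r-k}\,\Delta_{n}^{r-k}(f)\,\frac{(l-r+k)!}{(l-r)!}.
\]
Similarly $\Phi\bigl(\partial^{r}G/\partial y_{0}^{k}\partial y_{1}^{r-k}\bigr)$ equals $(-1)^{k}\,\Delta_{n}^{k}(g)\,(m-k)!/(m-r)!$, corresponding to $j=k$.

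Plugging these into the definition of the transvectant and collecting the sign $(-1)^{k}(-1)^{r-k}(-1)^{k}=(-1)^{r+k}$ gives
\[
[f,g]^{(r)} = \sum_{k=0}^{r}(-1)^{r+k}\binom{r}{k}\Delta_{n}^{r-k}(f)\,\frac{(l-r+k)!}{(l-r)!}\,\Delta_{n}^{k}(g)\,\frac{(m-k)!}{(m-r)!}.
\]
Finally I would reindex by $j:=r-k$, using $\binom{r}{r-j}=\binom{r}{j}$ and $(-1)^{2r-j}=(-1)^{j}$, to bring the expression into exactly the form claimed in the proposition.

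There is essentially no obstacle beyond bookkeeping: the one thing to verify carefully is that the ranges of the surviving indices $i=r-k$, $j=k$ (for $0\le k\le r$) automatically lie in the summation ranges $0\le i\le l$, $0\le j\le m$, which holds because $r\le\min(l,m)$, so no boundary term is lost. With that observed, the computation is purely mechanical and yields the stated formula.
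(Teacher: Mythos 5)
Your proposal is correct and follows essentially the same route as the paper: substitute the explicit formula for $\Phi^{-1}$, observe that applying $\Phi$ kills all but the single term of each differentiated series with $y_0$-exponent zero (which exists since $r\le\min(\ord(f),\ord(g))$), use that $\Phi$ is an algebra homomorphism, and reindex $k\mapsto r-k$ to obtain the stated formula. The only cosmetic difference is that the paper writes out the full derivative expansions before evaluating at $y_0=0$, $y_1=1$, whereas you identify the surviving term directly; the computations and signs agree.
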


\begin{proof}
 Let $\frac{\Delta^i_n(f)}{i!}:= \lambda_i$ and $\frac{\Delta^i_n(g)}{i!}:= \mu_i$. Then 
\[
\frac{ \partial^r \Phi^{-1}(f)}{\partial y_0^{r-k} \partial y_{1}^k} =
\sum_{i=r-k}^{\ord(f)-k} (-1)^{i}\lambda_i \frac{i!}{(i-r+k)!} \frac{(\ord(f)-i)!}{(\ord(f)-i-k)!}y_0^{i-r+k}y_1^{\ord(f)-i-k}
\] 
and 
\[
\frac{ \partial^r \Phi^{-1}(g)}{\partial y_0^{k} \partial y_1^{r-k}} =
\sum_{i=k}^{\ord(g)-r+k} (-1)^{i}\mu_i
\frac{i!}{(i-k)!} \frac{(\ord(g)-i)!}{(\ord(g)-i-r+k)!}
y_0^{i-k}y_1^{\ord(g)-i-r+k},
\]
therefore 
$$
\Phi\left(\frac{ \partial^r \Phi^{-1}(f)}{\partial y_0^{r-k} \partial y_{1}^{k}}\right) =(-1)^{r-k}
\lambda_{r-k}\frac{(r-k)!(\ord(f)-r+k)!}{(\ord(f)-r)!}
$$
 and 
$$
\Phi\left(\frac{ \partial^r \Phi^{-1}(g)}{\partial y_0^{k} \partial
    y_1^{r-k}}\right) =(-1)^{k}
\mu_{k}\frac{k!(\ord(g)-k)!}{(\ord(g)-r)!}.
$$ Using the fact that $\Phi$ is an algebra homomorphism we have
\begin{eqnarray*}
[f,g]^{(r)}& =& \sum_{k=0}^r (-1)^{k+r} \left(\begin{array}{c} r \\k
  \end{array}\right)  \lambda_{r-k}\frac{(r-k)!(\ord(f)-r+k)!}{(\ord(f)-r)!} \mu_{k}\frac{k!(\ord(g)-k)!}{(\ord(g)-r)!},
\\
& =& \sum_{k=0}^r (-1)^{k+r} \left(\begin{array}{c} r \\k \end{array} \right) \Delta_{n}^{r-k}(f)\frac{(\ord(f)-r+k)!}{(\ord(f)-r)!} \Delta_{n}^{k}(g)\frac{(\ord(g)-k)!}{(\ord(g)-r)!}\\
& =& \sum_{k=0}^r (-1)^{k} \left(\begin{array}{c} r \\k \end{array} \right) \Delta_{n}^{k}(f)\frac{(\ord(f)-k)!}{(\ord(f)-r)!} \Delta_{n}^{r-k}(g)\frac{(\ord(g)-r+k)!}{(\ord(g)-r)!}
\end{eqnarray*} as required.
\end{proof}

\begin{rem}
 This is analogous to \cite[Lemma~1]{BedratyukInvs}, using a different basis.
\end{rem}

The formula shows that,  up to some scalar factor,  $f_m$ (from \eqref{DefFm}) equals
$[x_0,x_0]^{(2m)}$ (while $[x_{0},x_{0}]^{(r)}=0$ for $r$ odd), and $\epsilon_{s_m}(x_1)$ equals $[x_0,f_m]^{(1)}$. We
wonder whether there is also a connection between $\epsilon_{s_m}(x_j)$ and
$[x_0,f^j_m]^{(j)}$. 
\begin{Lemma}\label{TheBigLemma} 
Suppose $n$ is divisible by 4, so $n=2m=4p$. Then there is an invariant $w \in A_n$ satisfying $\pi_{m,n}(w)=x_0^3$.
\end{Lemma}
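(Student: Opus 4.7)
The plan is to construct $w$ as a scalar multiple of the semitransvectant $W := [x_0, f_p]^{(4p)}$. Both $x_0 = f_0$ and $f_p$ are $\Ga$-invariants of order $4p$ (that is, isobaric of weight $4p$), so $W$ is a well-defined element of $A_{4p}$ of total order $4p + 4p - 2\cdot 4p = 0$, hence an invariant of weight zero, and of degree $1 + 2 = 3$ in the $x_i$. Any degree-$3$ weight-zero monomial $x_{i_1}x_{i_2}x_{i_3}$ in $R_{4p}$ satisfies $i_1 + i_2 + i_3 = 6p$; and since $\pi := \pi_{m,n}$ kills every $x_k$ with $k < 2p$, only $x_{2p}^3$ (forced by $i_j \geq 2p$ together with the weight condition) can survive. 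Therefore $\pi(W) = \kappa\, x_0^3$ for some $\kappa \in \kk$, and the task reduces to showing $\kappa \neq 0$; then $w := \kappa^{-1}W$ has the required property.

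To compute $\kappa$, I would apply Proposition \ref{transvectant} together with the explicit formula $\Delta_n^k(x_0) = \frac{(4p)!\,k!}{(4p-k)!}x_k$ and the Leibniz rule for $\Delta_n$. A degree--weight accounting shows that only the summand indexed by $k = 2p$ in the semitransvectant formula can contribute to the $x_{2p}^3$-coefficient: for $k < 2p$ the factor $\Delta_n^k(x_0)$ is a scalar multiple of $x_k$ and is killed by $\pi$, while for $k > 2p$ the factor $\Delta_n^{4p-k}(f_p)$ has degree-$2$ monomials of total index $6p - k < 4p$ and therefore no $x_{2p}^2$ component. Writing $f_p = \tfrac12 \sum_{a=0}^{2p}(-1)^a x_a x_{2p-a}$ and carrying out the Leibniz expansion at $k = 2p$, a short generating-function manipulation (amounting to extracting the coefficient of $z^{2p}w^{2p}$ from $(1+z)^{2p}(1+w)^{2p}(z-w)^{2p}$) reduces $\kappa$ to a universal nonzero constant times
\[
\sum_{j=0}^{2p}(-1)^j \binom{2p}{j}^3.
\]

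The substantive obstacle --- and the reason the lemma warrants its own section --- is showing that this alternating cube-binomial sum is nonzero. This is however exactly Dixon's classical hypergeometric identity,
\[
\sum_{j=0}^{2p}(-1)^j\binom{2p}{j}^3 \;=\; (-1)^p\frac{(3p)!}{(p!)^3},
\]
which is a nonzero integer. Hence $\kappa \neq 0$, and $w := \kappa^{-1}[x_0, f_p]^{(4p)}$ is the required invariant in $A_{4p}$ satisfying $\pi_{m,n}(w) = x_0^3$.
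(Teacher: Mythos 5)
Your construction is the same as the paper's: you take $w$ to be a scalar multiple of the semitransvectant $[x_0,f_p]^{(4p)}$, observe by the weight/degree count that $\pi_{m,n}$ of it is a multiple of $x_0^3$, and note that only the middle index $k=2p$ survives $\pi$, so everything reduces to the nonvanishing of a single alternating binomial sum. Where you genuinely diverge is in how that sum is treated. The Leibniz expansion produces, as in Lemma \ref{TheSum}, the sum $T(p)=\sum_{i=0}^{2p}(-1)^i\binom{2p}{i}\binom{4p-i}{2p}\binom{2p+i}{i}$, which in its natural ${}_3F_2$ form is not covered by the classical summation theorems --- this is precisely why the paper evaluates it by Zeilberger's algorithm with a WZ certificate. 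You instead recast it as a terminating cube sum: writing $\binom{4p-i}{2p}=[z^{2p}](1+z)^{4p-i}$ and $\binom{2p+i}{i}=[w^{2p}](1+w)^{2p+i}$ and summing over $i$ gives
\[
T(p)=[z^{2p}w^{2p}]\,(1+z)^{4p}(1+w)^{2p}\Bigl(1-\tfrac{1+w}{1+z}\Bigr)^{2p}
=[z^{2p}w^{2p}]\,(1+z)^{2p}(1+w)^{2p}(z-w)^{2p}
=\sum_{j=0}^{2p}(-1)^j\binom{2p}{j}^3,
\]
which Dixon's classical identity evaluates as $(-1)^p(3p)!/(p!)^3\neq 0$; the remaining prefactor is a product of factorials, binomials and the $\tfrac12$ from $f_p$, hence nonzero, so $\kappa\neq 0$ as you claim. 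Your route thus replaces the computer-generated recurrence of Lemma \ref{TheSum} by a two-line generating-function transformation plus a textbook identity; it is more elementary, and it shows, as a pleasant counterpoint to the remark after Lemma \ref{TheSum}, that the sum \emph{is} Dixon-summable once rewritten in the symmetric form. The only step you leave implicit is the "short generating-function manipulation", which is indeed as short as claimed and is supplied above; with that made explicit the argument is complete.
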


\begin{proof}
Throughout the proof we use the shorthand $\pi:= \pi_{m,n}$, and we set
$f:=f_{p}=\frac{1}{2}\sum_{i=0}^m(-1)^ix_ix_{m-i}$ (which is proportional to $[x_0,x_0]^{(m)}$). Obviously, $f$ is
isobaric of weight $(n-2i)+(n-2(m-i))=2n-2m=n=\ord(f)$.
Thus we may define \[
\bar{w} := [x_0,f]^{(n)}=\sum_{k=0}^{n}(-1)^{k}\frac{n!^{2}k!}{(n-k)!}x_{k}\Delta_{n}^{n-k}(f)=\sum_{k=0}^{n}(-1)^{k}\frac{n!^{2}(n-k)!}{k!}x_{n-k}\Delta_{n}^{k}(f),
\]
where we used Proposition~\ref{transvectant}. Thus,
$$\pi(\bar{w}) = \sum_{k=0}^m (-1)^k \frac{n!^{2}(n-k)!}{k!} x_{m-k}\pi(\Delta_n^k(f)).$$   
Using Leibniz's formula for iterated differentiation of products, we have 
\[
\pi(\Delta_n^k(x_{i}x_{m-i})) = \sum_{j=0}^k \tiny{ \left( \begin{array}{c} k \\
      j \end{array} \right)}
\pi(\Delta_n^jx_i)\pi(\Delta_n^{k-j}x_{m-i})\]
\begin{eqnarray*}\label{deltaig}
\hspace{-0.6cm}
&=&\sum_{j=m-i}^{k-i}\tiny{\left(
    \begin{array}{c} k \\ j \end{array} \right)}
\frac{(i+j)!(n-i)!}{i!(n-i-j)!}\pi(x_{i+j})\frac{(m-i+k-j)!(m+i)!}{(m-i)!(m+i-k+j)!}\pi(x_{m-i+k-j})\\
& =& \sum_{j=0}^{k-m}\tiny{\left(\begin{array}{c}k \\ j+m-i \end{array}
  \right)}
\frac{(m+j)!(n-i)!}{i!(m-j)!}\pi(x_{m+j})\frac{(k-j)!(m+i)!}{(m-i)!(n-k+j)!}\pi(x_{k-j}).
\end{eqnarray*}
In particular, $\pi(\Delta^k_n(x_ix_{m-i}))=0$ for all $k<m$, and since $f$ is a linear combination of terms of the form $x_ix_{m-i}$, we have $\pi(\Delta^k_n(f))=0$ for all $k<m$.
From this, remembering $m$ is even, it follows that $\pi(\bar{w}) = n!^{2} x_0 \pi(\Delta^m_n(f))$. 
Therefore, since
$$\pi(\Delta_n^m(x_ix_{m-i})) = \left(\begin{array}{c} m \\ m-i \end{array} \right) x^2_0 \frac{(n-i)!}{i!}\frac{(m+i)!}{(m-i)!},$$
and $f=\frac{1}{2}\sum_{i=0}^m(-1)^ix_ix_{m-i}$, we obtain
\begin{eqnarray*}
\pi(\Delta_n^m(f))& =& \frac{1}{2}x_0^2\sum_{i=0}^m(-1)^i \left(
  \begin{array}{c} m \\ i \end{array} \right)
\frac{(n-i)!}{i!}\frac{(m+i)!}{(m-i)!}\\
&=& 
 \frac{((2p)!)^2}{2}x_0^2\sum_{i=0}^{2p}(-1)^i \left( \begin{array}{c} 2p \\ i
   \end{array} \right)\left( \begin{array}{c} 4p-i \\ 2p \end{array}
 \right)\left( \begin{array}{c} 2p+i \\ i \end{array} \right).
\end{eqnarray*} 
Thus, $\pi(\bar{w})=n!^{2} x_0 \pi(\Delta^m_n(f))$ is a nonzero multiple of
$x_{0}^{3}$ if the sum above is nonzero. This follows from  Lemma \ref{TheSum}.\end{proof}

\begin{rem} 
With $g:=\Delta_{n}^{n}(f)$, we have
$\bar{w}=c\cdot\sum_{k=0}^{n}(-1)^{k}x_{k}D^{k}g$ with $c\in \kk$.
\end{rem}

\begin{Lemma}\label{TheSum}
For all $p\ge 1$ we have 
\begin{equation*} 
\sum^{2p}_{k=0} (-1)^k\left( \begin{array}{c} 2p \\ k \end{array} \right)\left( \begin{array}{c} 4p-k \\ 2p \end{array} \right)\left( \begin{array}{c} 2p+k \\ k \end{array} \right)=(-1)^p\frac{(3p)!}{(p!)^3}.
\end{equation*}
\end{Lemma}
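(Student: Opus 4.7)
The plan is to reduce the sum to a classical Dixon-type identity via coefficient extraction in a bivariate generating function. First, I would write each of the binomials depending on the summation index as a coefficient extraction: $\binom{4p-k}{2p}=[z^{2p}](1+z)^{4p-k}$ and $\binom{2p+k}{k}=\binom{2p+k}{2p}=[w^{2p}](1+w)^{2p+k}$. Substituting these, the sum becomes
\[
\sum_{k=0}^{2p}(-1)^k\binom{2p}{k}\binom{4p-k}{2p}\binom{2p+k}{k}=[z^{2p}w^{2p}]\,(1+z)^{4p}(1+w)^{2p}\sum_{k=0}^{2p}\binom{2p}{k}\!\left(-\tfrac{1+w}{1+z}\right)^{k}.
\]

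By the binomial theorem, the inner sum collapses to $\left(1-\tfrac{1+w}{1+z}\right)^{2p}=\tfrac{(z-w)^{2p}}{(1+z)^{2p}}$, so the problem reduces to computing
\[
[z^{2p}w^{2p}]\,(1+z)^{2p}(1+w)^{2p}(z-w)^{2p}.
\]
Expanding $(z-w)^{2p}=\sum_{j=0}^{2p}\binom{2p}{j}(-1)^{2p-j}z^{j}w^{2p-j}$ and noting that $[z^{2p}]z^{j}(1+z)^{2p}=\binom{2p}{2p-j}=\binom{2p}{j}$ and similarly for $w$, the coefficient extracts cleanly to $\sum_{j=0}^{2p}(-1)^{j}\binom{2p}{j}^{3}$.

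At this point I would invoke the classical Dixon identity $\sum_{k=-n}^{n}(-1)^{k}\binom{2n}{n+k}^{3}=\tfrac{(3n)!}{(n!)^{3}}$, which after the reindexing $j=n+k$ reads $\sum_{j=0}^{2n}(-1)^{j}\binom{2n}{j}^{3}=(-1)^{n}\tfrac{(3n)!}{(n!)^{3}}$; taking $n=p$ gives precisely the right-hand side of Lemma \ref{TheSum}. The main conceptual obstacle is recognizing that the original three-factor alternating sum, which looks superficially asymmetric, collapses onto Dixon's well-known cube-of-binomials sum; once the coefficient-extraction manipulation above is performed, everything else is bookkeeping. An alternative would be to verify the identity directly via the Wilf--Zeilberger algorithm (consistent with the ``generalized hypergeometric series'' keyword of the paper), producing a rational certificate that proves both sides satisfy the same first-order recurrence in $p$ with matching initial value at $p=1$.
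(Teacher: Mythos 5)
Your argument is correct, but it takes a genuinely different route from the paper's. The paper proves Lemma \ref{TheSum} by Zeilberger's algorithm: it exhibits a rational certificate $G(p,k)$ establishing the first-order recurrence $6(3p+2)(3p+1)S(p)+2(p+1)^2S(p+1)=0$ and concludes by induction --- exactly the fallback you mention at the end; the authors even remark that the sum, viewed as a well-poised ${}_{3}F_{2}$ at $1$, is not covered by the classical summation theorems (Dixon, Watson) because the associated nonterminating series diverges, which is why they resort to creative telescoping. Your coefficient-extraction manipulation sidesteps that obstruction: every step is a finite polynomial identity ($\binom{4p-k}{2p}=[z^{2p}](1+z)^{4p-k}$, $\binom{2p+k}{k}=[w^{2p}](1+w)^{2p+k}$, the binomial theorem, and the evaluation $[z^{2p}w^{2p}](1+z)^{2p}(1+w)^{2p}(z-w)^{2p}=\sum_{j=0}^{2p}(-1)^{j}\binom{2p}{j}^{3}$, where the sign bookkeeping uses that $2p$ is even), and it lands on the terminating, symmetric cube sum, for which the combinatorial form of Dixon's identity $\sum_{k=-p}^{p}(-1)^{k}\binom{2p}{p+k}^{3}=\frac{(3p)!}{(p!)^{3}}$ applies directly --- no convergence issue arises because that is a finite identity with many elementary proofs; I checked your reduction numerically for small $p$ and it is consistent. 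What your route buys is a computer-free, conceptual reduction to a standard classical result; what the paper's route buys is self-containedness (the certificate, once produced, is verifiable by hand) and independence from Dixon. The one caveat is that you invoke Dixon's identity without proof, so your argument is complete only modulo that result; given the paper's own remark about Dixon's \emph{theorem} being inapplicable here, it would be worth a sentence making explicit that you are using the terminating combinatorial identity, not the nonterminating ${}_{3}F_{2}$ summation theorem.
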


\begin{proof}
The argument which follows was produced using the implementation of Zeilberger's algorithm \cite{WilfZeilberger} in the remarkable EKHAD package for Maple \cite{AEqualsB}.
Let $$F(p,k):= \left\{ \begin{array}{cc}(-1)^k\left( \begin{array}{c} 2p \\ k
      \end{array} \right)\left( \begin{array}{c} 4p-k \\ 2p \end{array}
    \right)\left( \begin{array}{c} 2p+k \\ k \end{array} \right) & 0 \leq k
    \leq 2p\\ 0 & \text{Otherwise},\end{array} \right.$$ and let $S(p):=
\sum_{k=0}^{2p}F(p,k)$. We show  that the following recurrence relation holds:
\begin{equation}\label{recurrence} 6(3p+2)(3p+1)S(p)+2(p+1)^2S(p+1)=0.\end{equation}

To do this, we consider the function
\begin{eqnarray*}G(p,k):=  \frac{1}{2}k^2(180-184k+1036p+59k^2+2192p^2-790pk-1116p^2k+168pk^2 \\  +2024p^3-8k^3+688p^4+k^4-520p^3k+120p^2k^2-10pk^3)(-4p+k-1)\frac{F(p,k)}{R(p,k)} \end{eqnarray*}
where $R(p,k) = (2p+1)(-2p-2+k)^2(-2p-1+k)^2$. For $0\le k\le 2p-1$ it satisfies the relation
\[G(p,k+1)-G(p,k)=6(3p+2)(3p+1)F(p,k)+2(p+1)^2F(p+1,k).\]
 Summing both sides over $0\le k\le 2p-1$ (and adding remaining terms) produces (\ref{recurrence}), and an easy inductive argument then shows that $S(p)= (-1)^p\frac{(3p)!}{(p!)^3}$.
\end{proof}
\begin{rem} The sum $S(p)$ is the well-poised hypergeometric series 
 \[ \left( \begin{array}{c} 4p \\ 2p \end{array} \right) \sum^{\infty}_{k=0} \frac{(-2p)_k(2p+1)_k(-2p)_k}{(1)_k(-4p)_k k!} = \left( \begin{array}{c} 4p \\ 2p \end{array} \right) {}_{3}F_{2}[-2p,2p+1,-2p;1,-4p; 1].\] Surprisingly, the series is not summable by any classical hypergeometric sum theorem (e.g. Dixon's theorem, Watson's theorem) because the series  $_{3}F_{2}[-2p,2p+1,-2p;1,-4p; z]$, $p$ not an integer, does not converge when $z=1$, see \cite[Chapter~2]{SlaterHypergeom}. For this reason, we have to apply Zeilberger's algorithm for partial sums in order to sum the series. In the language of WZ-theory, $\bar{F},G$ is a WZ-pair, where $\bar{F}(p,k):= (-1)^p\frac{(p!)^3F(p,k)}{(3p)!}$, and $R(p,k)$ is the corresponding WZ-proof certificate.   
\end{rem}

\bibliographystyle{plain}
\bibliography{MyBib}
\end{document}